\title{A New Optimality Property of Strang's Splitting}
\author{Fernando Casas\footnote{Departament de Matem\`atiques and IMAC, Universitat Jaume I, E-12071, Castell\'on de la Plana, Spain
  ({casas@uji.es}, {shaw@uji.es}).}
\and Jes{\'u}s Mar{\'\i}a Sanz-Serna\footnote{Departamento de Matem\'aticas, Universidad Carlos III de Madrid, E-28911, Legan\'es, Spain
  ({jmsanzserna@gmail.com}).}
\and Luke Shaw\footnotemark[1]}
\newcommand{\R}{\mathbb{R}}
\newtheorem{theorem}{Theorem}[section]
\newtheorem{lemma}[theorem]{Lemma}
\newtheorem{proposition}[theorem]{Proposition}
\pgfplotsset{width=10cm,compat=1.9}
\begin{document}
\maketitle
\begin{abstract} For systems of the form $\dot q = M^{-1} p$, $\dot p = -Aq+f(q)$, common in many
applications, we analyze splitting integrators based on the (linear/nonlinear) split systems $\dot q =
M^{-1} p$, $\dot p = -Aq$ and $\dot q = 0$, $\dot p = f(q)$. We show that the well-known Strang splitting
is optimally stable in the sense that, when applied to a relevant model problem, it has a larger stability
region than alternative integrators. This generalizes a well-known property of the common
St\"{o}rmer/Verlet/leapfrog algorithm, which of course arises from Strang splitting based on the
(kinetic/potential) split systems $\dot q = M^{-1} p$, $\dot p = 0$ and $\dot q = 0$, $\dot p = -Aq+f(q)$.
\end{abstract}

\noindent {This paper is dedicated to Gilbert Strang.}

\section{Introduction}
We are concerned with numerical integrators for second-order systems in $\R^d$
\begin{equation}\label{eq:SplitEq}
M\ddot{q}=-Aq+f(q),
\end{equation}
where $M$ and $A$ are constant $d\times d$ matrices ($M$ invertible), or equivalently for first-order systems in $\R^{2d}$
$$
\dot q = M^{-1} p, \qquad \dot p = -Aq+f(q).
$$
 Our aim is to prove that the Strang splitting integrator \cite{Strang1963} based on the (linear/nonlinear) split systems
\begin{equation}\label{eq:RotSystem}
\dot q = M^{-1} p,\qquad \dot p = -Aq
\end{equation}
and
\begin{equation}\label{eq:KickSystemf}
\dot q = 0,\qquad \dot p = f(q)
\end{equation}
possesses an optimal stability property.

The format \eqref{eq:SplitEq} is a particular instance of the system
\begin{equation}\label{eq:SplitEqg}
M \ddot q = g(q)
\end{equation}
that appears very frequently in many applications. The best-known integrator for \eqref{eq:SplitEqg} is perhaps the St\"{o}rmer/leapfrog/Verlet algorithm \cite{HLWBook}. In its Verlet formulation, the integrator is constructed by  applying Strang's splitting to the first-order system
$$
\dot q = M^{-1} p, \qquad \dot p = g(q),
$$
with the (kinetic/potential) split systems
\begin{equation}\label{eq:DriftSystem}
\dot q = M^{-1} p,\qquad \dot p = 0,
\end{equation}
and
\begin{equation}\label{eq:KickSystem}
\dot q = 0,\qquad \dot p = g(q).
\end{equation}
More precisely, let us denote by $\varphi_t^{[D]}$ the solution flow of \eqref{eq:DriftSystem},  $t\in \R$,
$$
\varphi^{[D]}_t(q,p) = (q+tM^{-1}p,p),
$$
and
by $\varphi_t^{[K]}$ the solution flow of \eqref{eq:KickSystem},
$$
\varphi_t^{[K]}(q,p) = (q,p+tg(q)),
$$
then a timestep of length $h>0$ of the position Verlet algorithm is given by the map
$$
\psi_h^{[pos]} = \varphi^{[D]}_{h/2}\circ\varphi^{[K]}_h\circ\varphi^{[D]}_{h/2}
$$
and a step
of
the velocity Verlet algorithm is defined by the map
$$
\psi_h^{[vel]} = \varphi^{[K]}_{h/2}\circ\varphi^{[D]}_h\circ\varphi^{[K]}_{h/2},
$$
where the roles of $\varphi^{[D]}$ and $\varphi^{[K]}$ have been swapped. The labels D and K we have used
correspond to the words \emph{drift} and \emph{kick}, commonly used in molecular dynamics to refer to
$\varphi^{[D]}$ and $\varphi^{[K]}$ respectively \cite{Garcia1998}.

In spite of its simplicity, the Verlet integrator is the method of choice in many applications \cite{LeimkuhlerBook}. One of the advantages of the (position or velocity) Verlet integrator is that it possesses, among a wide class of \emph{explicit} integrators, an \emph{optimal} stability interval \cite{Jeltsch1981,Chawla1981,SS1986,BouRabee2018}. In fact,  Verlet strictly maximizes the scaled length of the stability interval, i.e.\ the quotient $\Lambda/m$, where $\Lambda$ is the length of the stability interval and $m$ the number of evaluations of $g$ per step. In other words, for any explicit competitor integrator using $m$ evaluations per step, there are values of $h$ such that Verlet integrations with steplength $h$ are stable while the (equally costly) integrations of the competitor with steplength $mh$ are unstable.
 In short, the Verlet algorithm
may be operated with longer (scaled) timesteps than any of its explicit
competitors; this makes it appealing in applications, including molecular dynamics,
where integrations are performed with values of $h$ close to the stability limit because high accuracy
is either not required or impossible to achieve due to the complexity of the problem (for instance in cases
 where $g$ is very expensive to evaluate).

When, in \eqref{eq:SplitEqg}, $g$ takes the particular form $g(q) = -Aq+f(q)$ as in \eqref{eq:SplitEq}, instead of splitting the given system as \eqref{eq:DriftSystem}--\eqref{eq:KickSystem}, it may be advantageous to split as \eqref{eq:RotSystem}--\eqref{eq:KickSystemf} and consider the Strang integrators RKR and KRK
\begin{equation}\label{eq:RKR}
\psi^{[RKR]}_h = \varphi_{h/2}^{[R]}\circ\varphi_{h}^{[K]}\circ\varphi_{h/2}^{[R]}
\end{equation}
and
\begin{equation}\label{eq:KRK}
\psi^{[KRK]}_h = \varphi_{h/2}^{[K]}\circ\varphi_{h}^{[R]}\circ\varphi_{h/2}^{[K]},
\end{equation}
where $\varphi_t^{[R]}$ and  $\varphi_t^{[K]}$ denote respectively the solution flows of the systems
\eqref{eq:RotSystem} and \eqref{eq:KickSystemf}. (Of course, kicks are now based on $f$ rather than on $g$.)
We use the identifier R from \emph{rotation} because in typical applications the matrices $M$ and $A$ are
symmetric and positive definite and then the solution map
$$
\begin{bmatrix}q\\p\end{bmatrix}\mapsto\exp\left(t\begin{bmatrix}0&M^{-1}\\-A&0\end{bmatrix}\right)
\begin{bmatrix}q\\p\end{bmatrix}
$$
of \eqref{eq:RotSystem} describes, after a suitable linear change of variables, $d$ rotations in the (two-dimensional)  planes
$(q^i,p^i)$, $i=1,\dots,d$, where $q^i$ and $p^i$ are the scalar components of $q$ and $p$. The splitting
\eqref{eq:RotSystem}--\eqref{eq:KickSystemf}  is particularly appealing when, in $g(q) = -Aq+f(q)$,
$f(q)$ is a small perturbation of $-Aq$: RKR, KRK and other splitting algorithms {using sequences of rotations and kicks} are  exact if
the perturbation vanishes. The main contribution of this paper is to show that, as  is the case for the
velocity and position Verlet integrators, the RKR and KRK integrators \eqref{eq:RKR}--\eqref{eq:KRK} possess
an optimal stability property. Roughly speaking, we show that for a model test problem, for each given steplength, RKR and KRK remain stable for larger perturbations $f$ than any other {rotation/kick splitting} integrator (see
Section~\ref{sec:main} for a precise statement).

{\bf Motivation.} Our interest in problems of the form \eqref{eq:SplitEq} originated  when studying
integrators for the Hamiltonian Monte Carlo (HMC) method, a sampling technique widely used in statistics and
statistical physics \cite{Neal2011,SanzSerna2014}. The bulk of the computational effort in HMC is in
integrating systems of the form \eqref{eq:SplitEqg} where $g(q)$ is the negative gradient of the logarithm of
the target probability density function and $M$ is a positive-definite symmetric matrix \emph{chosen by the
user}. Therefore devising suitable efficient integrators is of key importance to HMC
\cite{Blanes2014,BouRabee2018}. In many situations of interest \cite{Shahbaba2014}, the target density is a
perturbation of a Gaussian density and then $g(q) = -Aq+f(q)$ with $A$ the symmetric positive-definite
precision matrix of the Gaussian distribution and $f(q)$ a perturbation. As shown in \cite{Casas2022}, it is
then very advantageous to choose $M= A$ and then \eqref{eq:SplitEq} becomes
\begin{equation}\label{eq:HMCSys}
\ddot q = -q + \bar{f}(q),\qquad \bar{f}(q) = A^{-1}f(q).
\end{equation}
{It is also shown in \cite{Casas2022} that to integrate \eqref{eq:SplitEq} or \eqref{eq:HMCSys} the Strang splitting is far more efficient when applied to \eqref{eq:RotSystem}--\eqref{eq:KickSystemf} than when applied to the kinetic/potential \eqref{eq:DriftSystem}--\eqref{eq:KickSystem}. This suggests the investigation of rotation/kick splitting algorithms for \eqref{eq:SplitEq} or \eqref{eq:HMCSys}. Furthermore, f}or reasons detailed in \cite{Beskos2011,Beskos2013}, as a rule, integrations of \eqref{eq:HMCSys} within HMC simulations are best carried out with values of $h$ close to the stability limit of the integrator. Therefore it is of {clear} interest to identify the {rotation/kick splitting} integrators with optimal stability interval. In fact the motivation for the present research originated when our multiple attempts to construct integrators that improved on KRK or RKR failed \cite{Casas2022}.

{Exponential integrators \cite{Hochbruck2010} are a well-known class of algorithms that, as splitting methods, exploit the structure of \eqref{eq:SplitEq} or \eqref{eq:HMCSys}. However they are not relevant to HMC applications where symplecticness and time-reversibility are essential \cite{BouRabee2018}.}

{\bf Contents.} The article has five sections. Section~\ref{sec:preliminaries} contains preliminary material. The main optimality result is presented and proved in Section~\ref{sec:main}. Section~\ref{sec:discussion} provides complementary results to compare the size of the stability regions of the Strang splitting algorithms and some possible competitors. The final section contains  a technical proposition.

\section{Preliminaries}
\label{sec:preliminaries}

In this section we present a number of facts that are required to formulate and prove the main result presented in the next section.

\subsection{Splitting integrators}
The importance of splitting integrators in different applications has increased substantially in recent decades \cite{BlanesBook}, often in connection with preservation of geometric properties, such as symplecticness \cite{SSBook}.
Of course, the RKR and KRK methods \eqref{eq:RKR} and \eqref{eq:KRK} are not the only splitting algorithms to integrate \eqref{eq:SplitEq} with the help of the split systems \eqref{eq:RotSystem} and \eqref{eq:KickSystemf}. One may consider $m$-stage integrators by interleaving rotations and kicks, beginning with either R or K as follows
\begin{equation}\label{eq:OscillationCompositionIntegrator}
\psi_{h}=\varphi_{r_{m+1}h}^{[R]}\circ\varphi_{k_mh}^{[K]}\circ\varphi_{r_mh}^{[R]}\circ\ldots\circ\varphi_{k_1h}^{[K]}\circ\varphi_{r_1h}^{[R]},
\quad
\psi_{h}=\varphi_{k_{m+1}h}^{[K]}\circ\varphi_{r_mh}^{[R]}\circ\varphi_{k_mh}^{[K]}\circ\ldots\circ\varphi_{r_1h}^{[R]}\circ\varphi_{k_1h}^{[K]}.
\end{equation}
 We always assume the consistency requirements $\sum_i r_i = 1$ and $\sum_i k_i =1$. Some of the coefficients $r_i$ or $k_i$ are allowed to vanish as this simplifies the presentation. Note that the first format in \eqref{eq:OscillationCompositionIntegrator} uses (at most) $m$ kicks and therefore (at most) $\leq m$ evaluations of $f$ per step; the second format uses $\leq m+1$ kicks, but, since, if $k_{m+1}\neq 0$ and $k_1\neq 0$, the value of $f$ at the last kick of the current timestep may be used to perform the first kick of the next timestep, also requires essentially $\leq m$ evaluations of $f$ per timestep.

If $M$ and $A$ are symmetric and positive definite and $f(q) = -\nabla V(q)$ for a suitable scalar function $V$, then \eqref{eq:SplitEq} is  equivalent to the Hamiltonian system with Hamiltonian function $(1/2) p^TM^{-1}p+(1/2) q^TAq+V(q)$. In this case the split systems  \eqref{eq:RotSystem} and \eqref{eq:KickSystemf} are also Hamiltonian and therefore $\varphi_t^{[R]}$ and $\varphi_t^{[K]}$ are, for each $t\in \R$, symplectic maps, as flows of Hamiltonian systems. It follows that the splitting integrators in \eqref{eq:OscillationCompositionIntegrator} will be symplectic, as  is required in HMC applications
\cite{BouRabee2018}.

It is often the case that the coefficients $r_i$, $k_i$ in \eqref{eq:OscillationCompositionIntegrator} are chosen \emph{palindromically}, i.e.\ for compositions starting with $R$, $r_{m+2-i} = r_i$, $i=1,\dots, m+1$, and $k_{m+1-j} = k_j$, $j= 1,\dots, m$, and similarly for compositions starting with $K$. RKR and KRK are both palindromic. Palindromic splitting integrators have at least second order of accuracy and, in addition, are time-reversible, as  required in HMC applications \cite{BouRabee2018}.
\subsection{Conjugate integrators}
 Given two integrators $\psi_h$ and $\bar{\psi}_h$ of the form \eqref{eq:OscillationCompositionIntegrator}, we say that they are conjugate if there is an invertible map $\chi_h$ such that
 $$
 \bar{\psi}_h = \chi_h\circ\psi_h\circ\chi_h^{-1}.
 $$
 This notion goes back to Butcher's algebraic theory of Runge-Kutta methods \cite{Butcher1969, Butcher1972, Butcher1996}. The $n$-fold composition map $\bar{\psi}_h^n$ used to advance $n$ steps with method $\bar{\psi}_h$ may be written as
 $$
 \bar{\psi}_h^n = (\chi_h\circ\psi_h\circ\chi_h^{-1}) \circ (\chi_h\circ\psi_h\circ\chi_h^{-1}) \circ \cdots \circ(\chi_h\circ\psi_h\circ\chi_h^{-1}) = \chi_h\circ\psi_h^n\circ\chi_h^{-1},
 $$
and therefore to advance $n$ steps with method $\bar{\psi}_h$ one may (i) apply once the map $\chi_h^{-1}$ (preprocessing), (ii) advance $n$ steps with the integrator $\psi_h$, (iii) apply once the map $\chi_h$ (postprocessing). Butcher was interested in the case where $\bar{\psi}_h$ has order of consistency higher than $\psi_h$, since then pre/postprocessing make it possible to perform high-order integrations with $\bar{\psi}_h$ by implementing the low-order integrator $\psi_h$.

 An example of conjugate methods is afforded by the integrators RKR and KRK with the postprocessor
 $\chi_h = \varphi_{h/2}^{[R]} \circ \varphi_{h/2}^{[K]}$:
 \begin{eqnarray*}
 \psi^{[RKR]}_h
  &=&
   \varphi_{h/2}^{[R]}\circ\varphi_{h}^{[K]}\circ\varphi_{h/2}^{[R]}\\
   & = &
 \Big( \varphi_{h/2}^{[R]} \circ \varphi_{h/2}^{[K]}\Big)\circ
 \Big( \varphi_{h/2}^{[K]} \circ \varphi_{h}^{[R]}\circ\varphi_{h/2}^{[K]}\Big)\circ
 \Big( \varphi_{h/2}^{[R]} \circ \varphi_{h/2}^{[K]}\Big)^{-1}\\
 &=&
 \chi_h \circ \psi^{[KRK]}_h \circ \chi_h^{-1}.
 \end{eqnarray*}

 One may prove by means of similar manipulations that all (consistent) one-stage integrators, including the non palindromic, first-order  Lie-Trotter
 integrators $ \varphi_h^{[R]}\circ \varphi_h^{[K]}$ and $ \varphi_h^{[K]}\circ \varphi_h^{[R]}$
  may be conjugated to either RKR or KRK, which are palindromic and second-order. Clearly,
  $ \varphi_h^{[R]}\circ \varphi_h^{[K]}$ is obtained by setting $r_2 = 1$, $k_1=1$, $r_1=0$ in the first equality in \eqref{eq:OscillationCompositionIntegrator}; $ \varphi_h^{[K]}\circ \varphi_h^{[R]}$ results from the choice $r_2=0$, $k_1=1$, $r_1=1$ in the same equality. Both integrators may also be obtained by using the format in the second equality in \eqref{eq:OscillationCompositionIntegrator}.

  It is proved in \cite{Blanes2008} that every integrator may be conjugated to a palindromic integrator.

  For each problem \eqref{eq:SplitEq} the numerical trajectory $\psi_h^n(q,p)$, $n= 0, 1, 2,\dots$, generated by $\psi_h$ with initial condition $(q,p)$ is mapped by $\chi_h$ into the trajectory $\bar{\psi}_h^n(q^*,p^*)$, $n= 0, 1, 2,\dots$, with initial condition $(q^*,p^*) =\chi_h(q,p)$. For this reason the long-time properties of the numerical solutions generated by $\psi_h$ and $\bar{\psi}_h$ may be expected to be similar (for instance bounded/unbounded trajectories of $\psi_h$ correspond to bounded/unbounded trajectories of $\bar{\psi}_h$).

\subsection{The model problem}
Roughly speaking, a numerical integration with a given integrator and steplength $h$ is said to be unstable if the numerical solution shows unphysical growth as the number of computed timesteps increases. In order to make this notion mathematically precise, it is standard to restrict the attention to integrations performed on an easy-to-analyse  \emph{model} problem chosen in such a way that conclusions based on the model are relevant when dealing with more general problems.

For \eqref{eq:SplitEqg}, it is standard to use the model scalar problem  $\ddot q = -\omega^2 q$, i.e.\  the familiar harmonic oscillator. The relevance of this choice of model problem may be justified as follows. Let us assume, for simplicity, that $M$, as is the case in most applications, is symmetric and positive-definite (this hypothesis may be relaxed). Writing $M= LL^T$ and introducing new variables $\bar{q} = L^Tq$, \eqref{eq:SplitEqg} becomes $\ddot{\bar{q}} = L^{-1}g(L^{-T}\bar q)$. Furthermore, if $g$ is linear, $g(q) = -Aq$, then $\ddot{\bar{q}} = -L^{-1}AL^{-T}\bar q$. The important case, with oscillatory solutions, is that where $L^{-1}AL^{-T}$  is diagonalizable with positive eigenvalues (which happens if in particular $A$ is symmetric and positive definite). Then a new change of variables  reduces the system to a set of $d$ uncoupled scalar harmonic oscillators $\ddot q = -\omega^2 q$ (the eigenvalues of $L^{-1}AL^{-T}$ provide the values of $\omega^2$). For this construction to be useful it is required that the transformations that diagonalize the system being integrated also diagonalize the integrator, something that invariably happens for all integrators of practical interest.

In order to identify a suitable model problem for integrators for \eqref{eq:SplitEq} we proceed similarly. We consider the case where $f$ is linear $f(q) = -Bq$; the change of variables $\bar{q} = L^Tq$ brings the system to the form $\ddot{\bar{q}} = -L^{-1}(A+B)L^{-T}\bar q$. Under the hypothesis that there is a linear transformation that brings both $L^{-1}AL^{-T}$ and
$L^{-1}BL^{-T}$ to diagonal form,  after  a new change of variables the system is transformed into $d$ uncoupled scalar equations of the form
\begin{equation}\label{eq:ModelPrevious}
\ddot q=-(\lambda+\mu) q,\end{equation} where $\lambda$ and $\mu$ are eigenvalues of $L^{-1}AL^{-T}$  and $L^{-1}BL^{-T}$ associated with the same eigenvector. We are interested in problems with  $\lambda>0$ and $\lambda+\mu>0$ (something which happens in the important case where $A$ and $A+B$ are symmetric and positive definite), so that the equations \eqref{eq:ModelPrevious} corresponds to harmonic oscillators. The analysis of \eqref{eq:ModelPrevious} is simplified if we introduce a new time variable $t/\sqrt{\lambda}$, so as to have, after denoting $\varepsilon = \mu/\lambda$,
\begin{equation}\label{eq:ModelProblem}
\ddot{q}=-q-\varepsilon q,\qquad \varepsilon>-1.
\end{equation}
This model problem, that we refer to hereafter as ``the model problem'', has appeared e.g.\ in \cite{BouRabee2017}.

In the particular situation of the system \eqref{eq:HMCSys} arising in the HMC method, the derivation just outlined of the model
\eqref{eq:ModelProblem} may be greatly simplified. In fact, if $f$ is linear, $f(u)=-Bu$
so that $\bar{f}(u) = A^{-1}Bu$, and $A^{-1}B$ diagonalizes with eigenvalues $\varepsilon>-1$, then a single change of variables reduces \eqref{eq:HMCSys} to $d$ uncoupled harmonic oscillators of the form
\eqref{eq:ModelProblem}. In the case where $f(u)=-Bu$ is a small perturbation of $Au$, the eigenvalues $\varepsilon$ will actually have small magnitude.

\subsection{Integrating the model problem. Stability}
For the model problem \eqref{eq:ModelProblem},
$$
\varphi_t^{[R]}(q,p)=\begin{bmatrix}
\cos(t) &\sin(t)\\-\sin(t) & \cos(t)
\end{bmatrix}\begin{bmatrix}
q\\p
\end{bmatrix},\quad\quad
\varphi_t^{[K]}(q,p)=\begin{bmatrix}
1 &0\\-t\varepsilon & 1
\end{bmatrix}\begin{bmatrix}
q\\p
\end{bmatrix},
$$
where we note that both transformations have unit determinant as each corresponds to the flow of a Hamiltonian
system. By multiplying the matrices that represent the flows being composed in
\eqref{eq:OscillationCompositionIntegrator}, we obtain the matrices representing one step of the splitting integrator
$\psi_h$. In particular for the Strang splittings \eqref{eq:RKR} and \eqref{eq:KRK}, we find that the matrices
that perform a timestep of length $h$ are
\begin{equation}\label{eq:MatrixRKR}
    \begin{bmatrix}
    \cos(h)-\frac{h\varepsilon}{2}\sin(h) & \sin(h)-\varepsilon h\sin^2\left(\frac{h}{2}\right)\\
    -\sin(h)-\varepsilon h\cos^2\left(\frac{h}{2}\right) & \cos(h)-\frac{h\varepsilon}{2}\sin(h)
    \end{bmatrix}\qquad {\rm for}\qquad \psi^{[RKR]}_{\varepsilon,h}
\end{equation}
and
\begin{equation}\label{eq:MatrixKRK}
    \begin{bmatrix}
    \cos(h)-\frac{h\varepsilon}{2}\sin(h) & \sin(h)\\
    -\varepsilon h\cos(h)-(1-\left(\frac{h\varepsilon}{2}\right)^2)\sin(h)
    & \cos(h)-\frac{h\varepsilon}{2}\sin(h)
    \end{bmatrix}\qquad {\rm for}\qquad \psi^{[KRK]}_{\varepsilon,h}.
\end{equation}

For the  integrators in \eqref{eq:OscillationCompositionIntegrator} the (real) matrix  takes the form
$$M_{\varepsilon,h}=
\begin{bmatrix}
    A_{\varepsilon,h} & B_{\varepsilon,h}\\
    C_{\varepsilon,h}& D_{\varepsilon,h}
    \end{bmatrix}.
$$
 The dependence of the coefficients $A-D$ on $\varepsilon$
is \emph{polynomial} and with $m$ stages  $A$ and $D$ are polynomials of degree $\leq m$ in $\varepsilon$ (this is easily proved by induction).
The dependence on $h$, on the other hand, involves both powers of $h$ and trigonometric functions, as illustrated by \eqref{eq:MatrixRKR} and \eqref{eq:MatrixKRK}. For palindromic compositions
$A_{\varepsilon,h}=D_{\varepsilon, h}$ (see e.g.\ \cite{Blanes2014,Campos2017}).

The matrix $M_{\varepsilon,h}$ has unit determinant, as it results from multiplying rotations and kicks of unit
determinant. Then its (possibly complex) eigenvalues are inverse to one another, $\lambda_{\varepsilon,h}$ and $1/\lambda_{\varepsilon,h}$, and it is well known that one of the three following situations obtains:
\begin{enumerate}
\item The modulus of the trace $A_{\varepsilon,h}+D_{\varepsilon,h}=\lambda_{\varepsilon,h}+1/\lambda_{\varepsilon,h}$ of $M_{\varepsilon,h}$ is $<2$. This
    corresponds to two different complex eigenvalues of unit modulus. As $n$ increases the powers $M_{\varepsilon,h}^n$
    remain bounded and the integration is \emph{stable}.
\item The modulus of the trace is $=2$. Then there is a double real eigenvalue $\lambda=1/\lambda\in\{-1,1\}$. If, in
    addition $M_{\varepsilon,h}$ diagonalizes, then $M_{\varepsilon,h}$ is either  $I$ (the identity matrix) or  $-I$,
    with bounded powers, and the integration is \emph{stable}. When $M_{\varepsilon,h}$ does not diagonalize
    its powers grow linearly and the integration is \emph{linearly unstable}.
\item The modulus of the trace is $>2$. Then there is one real eigenvalue of modulus $>1$, leading to
    \emph{exponential instability}.
\end{enumerate}

Cases 1 and 3 above are robust against perturbations, in the sense that if, for a given integrator, the pair
$(\varepsilon, h)$ is in case 1 (respectively, case 3), all sufficiently close pairs are also in case 1
(respectively, case 3). Perturbations of case 2, on the contrary, will generically lead to either case 1 or
case 3. The stability region of an integrator is  the set in the $(\varepsilon,h)$ plane where it is stable.

The semitrace $$P(\varepsilon,h) = (1/2)(A_{\varepsilon,h}+D_{\varepsilon,h})=(1/2)(\lambda_{\varepsilon,h}+1/\lambda_{\varepsilon,h})$$ of $M_{\varepsilon,h}$ will be called, using a not very precise terminology, the \emph{stability polynomial} of the integrator; recall that it is a polynomial in $\varepsilon$ of degree $\leq m$ but its dependence on $h$ includes trigonometric functions. Exponentially unstable integrations correspond then to $|P(\varepsilon,h)|> 1$.

If the integrators $\psi_h$ and $\bar{\psi}_h$ are conjugate to each other, then the corresponding matrices satisfy the similarity condition
$$
\bar{M}_{\varepsilon,h}= S_{\varepsilon,h} M_{\varepsilon,h}S_{\varepsilon,h} ^{-1}
$$
where the matrix $S_{\varepsilon,h}$ corresponds to the postprocessor. As a consequence $\bar{M}_{\varepsilon,h}$ and ${M}_{\varepsilon,h}$ share the same pair of eigenvalues $\lambda_{\varepsilon,h}$, $1/\lambda_{\varepsilon,h}$ and therefore \emph{conjugate integrators share a common stability polynomial.}
  This property is illustrated by the RKR, KRK pair in \eqref{eq:MatrixRKR}--\eqref{eq:MatrixKRK}. The property was perhaps to be expected, because it was  pointed out above that for any two conjugate integrators   the numerical trajectories of  one of them  are mapped by the processor into numerical trajectories of the other.

\subsection{A property of the stability polynomial}

The following result will be essential to prove our main result.

\begin{proposition}\label{prop:consistency}
For each (consistent) integrator \eqref{eq:OscillationCompositionIntegrator} the stability polynomial satisfies:
\begin{equation}\label{eq:OscillationConsistencyRestriction}
P(\varepsilon,h)=\frac{1}{2}(A_{\varepsilon,h}+D_{\varepsilon,h})=\cos(h)-\frac{\varepsilon h}{2}\sin(h)+\mathcal{O}(\varepsilon^2),\qquad \varepsilon\rightarrow 0.
\end{equation}
\end{proposition}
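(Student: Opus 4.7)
The plan is to Taylor-expand the one-step matrix $M_{\varepsilon,h}$ in $\varepsilon$ about $\varepsilon = 0$ and read off the coefficients of $\varepsilon^0$ and $\varepsilon^1$ in its trace. The decisive observation is that the kick factor is affine in $\varepsilon$: one can write $K(t) = I + \varepsilon N(t)$ with $N(t) = \begin{bmatrix} 0 & 0 \\ -t & 0\end{bmatrix}$, so at $\varepsilon = 0$ every kick collapses to the identity. This turns what looks like a daunting expansion into a purely combinatorial bookkeeping exercise.

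At zeroth order all kicks disappear and only the rotations survive in the product. The identity $R(s)R(t) = R(s+t)$ together with the consistency condition $\sum_i r_i = 1$ collapses them into a single rotation of angle $h$, so $M_{0,h} = R(h)$ and $P(0,h) = \cos h$, matching the claim.

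For the first-order term I would differentiate the product with respect to $\varepsilon$ at $\varepsilon = 0$. By the Leibniz rule each surviving summand corresponds to a choice of a single kick factor, which contributes $N(k_j h)$, while all remaining kicks become the identity. Merging the flanking rotations via $R(s)R(t) = R(s+t)$ gives
$$
\left.\frac{\partial M_{\varepsilon,h}}{\partial \varepsilon}\right|_{\varepsilon = 0} = \sum_{j=1}^{m} R(\beta_j)\, N(k_j h)\, R(\alpha_j),
$$
where $\alpha_j = h\sum_{i\le j} r_i$ and $\beta_j = h\sum_{i> j} r_i$ satisfy $\alpha_j + \beta_j = h$, again by consistency. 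A short $2\times 2$ computation yields $\mathrm{tr}\bigl(R(\beta_j)\,N(k_j h)\,R(\alpha_j)\bigr) = -k_j h\sin(\alpha_j + \beta_j) = -k_j h\sin h$; summing over $j$ and invoking $\sum_j k_j = 1$ produces $-h\sin h$ for the derivative of the trace, whose half gives the desired first-order contribution $-(\varepsilon h/2)\sin h$. The second format in \eqref{eq:OscillationCompositionIntegrator} is handled identically after relabelling of index ranges.

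There is no genuine obstacle; the only point requiring a little care is to recognise that \emph{both} consistency conditions are used — $\sum_i r_i = 1$ to merge the surviving rotations into $R(h)$, and $\sum_j k_j = 1$ to collect the individual kick contributions to the trace derivative. The fact that $\alpha_j + \beta_j = h$ holds independently of $j$ is what aligns the trigonometric terms into a common $\sin h$ factor, producing an answer that depends only on $h$ and not on the specific coefficients $r_i$, $k_j$.
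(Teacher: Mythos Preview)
Your proof is correct and follows essentially the same approach as the paper's: expand the product of rotation and kick matrices to first order in $\varepsilon$, use $\sum_i r_i=1$ to collapse the zeroth-order term to a single rotation $R(h)$, and for the first-order term sum the contributions $R(\beta_j)N(k_jh)R(\alpha_j)$, whose trace is $-k_jh\sin(\alpha_j+\beta_j)=-k_jh\sin h$, then invoke $\sum_jk_j=1$. The only cosmetic differences are notational (the paper writes the generators $R$, $K$ and their exponentials rather than your $R(\cdot)$, $N(\cdot)$) and that the paper disposes of the K-first format by padding with zero-length rotations rather than by relabelling, but the substance is identical.
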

\begin{proof}It is sufficient to consider the R-first format in the first equality in \eqref{eq:OscillationCompositionIntegrator}; a K-first integrator  may be rewritten in the R-first format by adding dummy rotations of duration $0h$ at the beginning and end of the step. We introduce the matrices
$$
R=\begin{bmatrix}
    0&1\\
    -1&0
    \end{bmatrix},
\qquad
    K=\begin{bmatrix}
    0&0\\
    -1&0
    \end{bmatrix},
$$
whose exponentials represent the rotation and the kick
$$
\exp(tR) = \begin{bmatrix}
    \cos(t)&\sin(t)\\
    -\sin(t)&\cos(t)
    \end{bmatrix},\qquad \exp(tK) = I+tK = \begin{bmatrix}
    1&0\\
    -t&1
    \end{bmatrix}.
$$
Then the matrix associated with the integrator is
\begin{equation}\label{eq:matrixasproduct}
M_{\varepsilon,h} = \exp(hr_{m+1}R) (I+\varepsilon h k_mK)\exp(hr_{m}R) (I+\varepsilon hk_{m-1}K)\cdots(I+\varepsilon hk_1K)\exp(hr_1R),
\end{equation}
which leads to
$$
M_{\varepsilon,h} = \exp(h\theta_{m+1}R) +\varepsilon h \sum_{i=1}^m k_i\exp(h(1-\theta_i)R) K \exp(h\theta_iR)+\mathcal{O}(\varepsilon^2),
$$
where $\theta_i = \sum_{j=1}^i r_j$.
By consistency $\theta_{m+1}= 1$ and therefore the semitrace of $\exp(h\theta_{m+1}R)$ is $\cos(h)$; this gives the term independent of $\varepsilon$ in the stability polynomial, as it was to be established in order to prove \eqref{eq:OscillationConsistencyRestriction}. The term of first degree in $\varepsilon$ in the last display may be computed as
$$
-\varepsilon h \sum_{i=1}^m k_i \begin{bmatrix}
    \sin(h\theta_i)\cos(h(1-\theta_i))&\sin(h\theta_i)\sin(h(1-\theta_i))\\
    \cos(h\theta_i)\cos(h(1-\theta_i))&\cos(h\theta_i)\sin(h(1-\theta_i))
    \end{bmatrix}.
$$
Thus the coefficient of $\varepsilon$ in the stability polynomial is
$$
-\frac{h}{2} \sum_{i=1}^m k_i \Big(\sin(h\theta_i)\cos(h(1-\theta_i))+\cos(h\theta_i)\sin(h(1-\theta_i)\Big)= -\frac{h}{2}
\sum_{i=1}^m k_i \sin(h) = -\frac{h}{2}\sin(h),
$$
as was to be proved.
\end{proof}

\subsection{Stability of the  integrators RKR and KRK}
We now study the stability of RKR/KRK with stability polynomial/semitrace (see \eqref{eq:MatrixRKR}--\eqref{eq:MatrixKRK}):
\begin{equation}\label{eq:stabpolRKR}
P(\varepsilon,h)= \cos(h) -\frac{h\varepsilon}{2} \sin(h).
\end{equation}
The conditions $P(\varepsilon,h) = 1$ and $P(\varepsilon,h) = -1$ correspond to $\varepsilon =\alpha(h)$ and $\varepsilon =\beta(h)$ respectively with
\begin{equation}\label{eq:alphabeta}
\alpha(h) = -\frac{2}{h} \tan\left(\frac{h}{2}\right), \qquad  \beta(h) = \frac{2}{h} \cot\left(\frac{h}{2}\right).
\end{equation}
If we restrict  attention to $0<h<\pi$, then the condition $|P(\varepsilon,h)|\leq 1$ holds if and only if $\varepsilon\in[\alpha(h),\beta(h)]$;  also, for such values of $h$, $\alpha(h)<-1$, $0< \beta(h)$. When integrating the model problem (where $\varepsilon>-1$) we have stability for $\varepsilon \in (-1, \beta(h))$ and exponential instability for $\varepsilon > \beta(h)$. The case $\varepsilon =\beta(h)$ yields linear instability. The function $\beta(h)$ decreases monotonically for $h\in(0,\pi)$ and
therefore increasing $h$ results in a decrease of the interval  $(0,\beta(h))$ of positive values of $\varepsilon$ leading to a stable integration.  As $h\uparrow\pi$, we have $\beta(h)\downarrow 0$, the interval $(0,\beta(h))$ approaches the empty set and thus there is little interest in considering $h\geq \pi$ when dealing with RKR and KRK. {This coincides with the analysis in \cite[\S 4.2.1]{LeimkuhlerBook}, where it is shown that $h=\pi$ is unstable for any non-zero $\varepsilon$.}

Since, as pointed out before, all (consistent) one-stage integrators are conjugate to  RKR or KRK the discussion above also applies to them. In particular, their  stability polynomial is also given by \eqref{eq:stabpolRKR} (a conclusion that may be reached alternatively from \cref{prop:consistency}, after taking into account that for $m=1$ the stability polynomial is of first degree in $\varepsilon$, so that the term $\mathcal{O}(\varepsilon^2)$ in
\eqref{eq:OscillationConsistencyRestriction} must vanish).

\subsection{The  RKRm and KRKm integrators}
\label{sec:RKRm}
To avoid duplications, the presentation in this subsection is limited to RKR, but all the results apply to KRK in an obvious manner.

In the analysis in the next section we shall use the auxiliary $m$-stage  integrator
$$
\psi_h^{[RKRm]} = \Big(\psi_{h/m}^{[RKR]}\Big)^m;
$$
a single step of length $h$ of $\psi^{[RKRm]}$ demands performing $m$ consecutive substeps with $\psi^{[RKR]}$, each of steplength $h/m$. As a consequence, integrations with $\psi^{[RKRm]}$ are in fact nothing but $\psi^{[RKR]}$ integrations; $\psi^{[RKRm]}$  is just a mathematical construction to facilitate the fair comparison between  $m$-stage integrators (with $m$ evaluations of $f$ per step) and the one-stage $\psi^{[RKR]}$ (with only one evaluation of $f$ per step).

Clearly
$$
M^{[RKRm]}_{\varepsilon, h} = \Big(M^{[RKR]}_{\varepsilon, h/m}\Big)^m,
$$
and, for the eigenvalues, $\lambda^{[RKRm]}_{\varepsilon,h}=\Big(\lambda^{[RKR]}_{\varepsilon,h/m}\Big)^m$.
It follows easily from \eqref{eq:alphabeta} that, restricting attention to $h<m\pi$, $|P^{[RKRm]}|<1$ if and only if
$\varepsilon\in(\alpha_m(h),\beta_m(h))$ with
\begin{equation}\label{eq:alphabetam}
\alpha_m(h) = -\frac{2m}{h} \tan\left(\frac{h}{2m}\right)<-1, \qquad  \beta_m(h) = \frac{2m}{h} \cot\left(\frac{h}{2m}\right)>0.
\end{equation}
When integrating the model problem, RKRm is stable if and only if $\varepsilon\in (\alpha_m(h),\beta_m(h))$ (although, as mentioned above, only stability for $\varepsilon>-1>\alpha_m(h)$ is significant). The case
$\varepsilon > \beta_m(h)$ yields exponential instability and $\varepsilon = \beta_m(h)$ gives linear instability. See Figure~\ref{fig:proof}.

 We now find an expression for the stability polynomial $P^{[RKRm]}(\varepsilon, h)$.
 Write $\lambda^{[RKR]}_{\varepsilon,h} = \exp({\rm i} \theta_{\varepsilon,h})$ ($\theta$ is real if $\lambda$ has unit modulus) with $\rm i$ the imaginary unit. Then, recalling \eqref{eq:stabpolRKR}, we may write
$$\cos(h) -\frac{h\varepsilon}{2} \sin(h)=
P^{[RKR]}(\varepsilon, h) = \frac{1}{2} \left(\lambda^{[RKR]}_{\varepsilon,h}+\frac{1}{\lambda^{[RKR]}_{\varepsilon,h}}\right)
= \frac{1}{2} \big(\exp ({\rm i} \theta_{\varepsilon,h})+\exp ({-}{\rm i} \theta_{\varepsilon,h})\big)
= \cos(\theta_{\varepsilon,h}),
$$
and
$$
P^{[RKRm]}(\varepsilon, h) = \frac{1}{2} \left(\lambda^{[RKRm]}_{\varepsilon,h}+\frac{1}{\lambda^{[RKRm]}_{\varepsilon,h}}\right)
= \frac{1}{2} \big(\exp ({\rm i}m \theta_{\varepsilon,h/m})+\exp ({-}{\rm i}m \theta_{\varepsilon,h/m})\big)
= \cos(m \theta_{\varepsilon,h/m}),
$$
so that, introducing the standard Chebyshev polynomial of the first kind $T_m$ with $T_m(\cos(\zeta)) = \cos(m\zeta)$ for all (real or complex) $\zeta$, we conclude that
\begin{equation}\label{eq:stabpolm}
P^{[RKRm]}(\varepsilon, h) = T_m \left(\cos\left(\frac {h}{m}\right) -\frac{h\varepsilon}{2m} \sin\left(\frac {h}{m}\right)\right).
\end{equation}
\section{Main result}\label{sec:main}

In the statement of the main result we denote by $h_m$ the smallest positive root of the equation
$$
\frac{mh}{2}\sin\left(\frac{h}{m}\right) = \cos\left(\frac{\pi}{m}\right)-\cos\left(\frac{h}{m}\right).
$$
For $m=1$, $h_m=\pi$ and, for $m>1$, $h_m <m\pi$. In addition $h_m$ increases
monotonically with $m$ and a
 straightforward Taylor expansion shows that,
 as $m\uparrow \infty$, $h_m = 12^{1/4} \pi^{1/2} m^{1/2}+o(m^{1/2})$. See Table~\ref{tab:my_label}.

\begin{table}[t]
    \centering
    \begin{tabular}{{|c|}*{10}{|c}|}
    \hline
         $m$ & $1$ & $2$ & $3$ & $4$ & $5$ & $6$ & $7$ & $8$ & $9$ & $10$ \\
         \hline
         $h_m$ & $\pi$ & $4.92$ & $5.98$ & $6.85$ & $7.61$ & $8.30$ & $8.93$ & $9.53$ & $10.08$ & $10.61$ \\
         \hline
    \end{tabular}
    \caption{Values of the quantity $h_m$ used in the main theorem.}
    \label{tab:my_label}
\end{table}

\begin{theorem}\label{theo:main}
Define $h_m$ as above. Then:
\begin{itemize}
\item For $h<m\pi$, $\varepsilon > -1$, integrations of the model problem
\eqref{eq:ModelProblem} with either RKRm and KRKm   are exponentially unstable if and only if
$\varepsilon \in (\beta_m(h),\infty)$.
\item Consider an $m$-stage splitting integrator $\psi_h$ of the
form \eqref{eq:OscillationCompositionIntegrator} with stability polynomial different from the stability
 polynomial \eqref{eq:stabpolm} of the integrators RKRm/KRKm. Then, for $h\neq \pi, 2\pi, \dots,(m-1)\pi$
 and $h<h_m$, the (open) set of values of $\varepsilon>-1$ that lead to exponentially unstable integrations
 of the model problem is \emph{strictly larger} than the interval $(\beta_m(h),\infty)$ where RKRm and KRKm show
 exponential instability.
\end{itemize}
\end{theorem}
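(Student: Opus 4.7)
The first bullet is a direct consequence of the analysis of Section~\ref{sec:RKRm}: formula \eqref{eq:alphabetam} together with $\alpha_m(h)<-1<\beta_m(h)$ for $h<m\pi$ shows that the physical range $\varepsilon>-1$ meets the RKRm exponential-instability region exactly in $(\beta_m(h),\infty)$.

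For the second bullet, my plan is a Chebyshev equi-oscillation argument based on the explicit form \eqref{eq:stabpolm}. I perform the affine change of variable $u=\cos(h/m)-(h\varepsilon/(2m))\sin(h/m)$, under which the RKRm stability polynomial becomes exactly $T_m(u)$; write $\tilde P(u)$ for the competitor's stability polynomial re-expressed in $u$, a polynomial of degree $\leq m$. \Cref{prop:consistency} implies that $\tilde P$ and $T_m$ agree in value and first derivative at $u^*:=\cos(h/m)$ (the image of $\varepsilon=0$), so the difference $D(u):=T_m(u)-\tilde P(u)$ factors as $D(u)=(u-u^*)^2 E(u)$ with $\deg E\leq m-2$.

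Next, let $u_j=\cos(j\pi/m)$, $j=0,\ldots,m$, be the Chebyshev extrema of $T_m$ and let $\varepsilon_j$ be their preimages under the change of variable; a short computation gives $\varepsilon_0=\alpha_m(h)$, $\varepsilon_m=\beta_m(h)$, with $\varepsilon_j$ strictly increasing in $j$. The role of the hypothesis $h<h_m$ is precisely to guarantee $\varepsilon_1>-1$ (the defining equation of $h_m$ rearranges to the condition $\varepsilon_1(h_m)=-1$), so that all $\varepsilon_j$ with $j\geq 1$ lie in the physical region; the exclusion $h\notin\{\pi,2\pi,\ldots,(m-1)\pi\}$ ensures $u^*\ne u_j$ for every $j$. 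Supposing, toward a contradiction, that the competitor is not exponentially unstable at any $\varepsilon_j$ with $j\geq 1$, one has $|\tilde P(u_j)|\leq 1$, hence $(-1)^j D(u_j)\geq 0$, and dividing by the strictly positive factor $(u_j-u^*)^2$ yields weak sign alternation of $E$ at the $m$ distinct nodes $u_1,\ldots,u_m$.

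The decisive step is then a Chebyshev-type alternation principle: a polynomial of degree at most $m-2$ with weak sign alternation at $m=(m-2)+2$ nodes must vanish identically. This forces $E\equiv 0$, hence $\tilde P=T_m$, contradicting the hypothesis $P\neq P^{[RKRm]}$; consequently $|\tilde P(u_{j^*})|>1$ for some $j^*\in\{1,\ldots,m\}$, and the competitor is exponentially unstable at $\varepsilon_{j^*}\in(-1,\beta_m(h)]$, a value where RKRm is not, establishing the strict enlargement. The main obstacle I anticipate is justifying the alternation lemma in the degenerate case where some $E(u_j)$ vanish and the alternation is only weak; this is handled by counting zeros of $E$ with multiplicity (each strict sign change between consecutive nodes contributes an interior zero, and each node at which $E$ vanishes contributes a zero there), so that the total already exceeds $\deg E$ unless $E\equiv 0$.
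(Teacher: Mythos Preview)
Your proposal is correct and follows essentially the same route as the paper: the same affine change of variable turning $P^{[RKRm]}$ into $T_m$, the same double zero at $u^*=\cos(h/m)$ supplied by Proposition~\ref{prop:consistency}, the same identification of $h<h_m$ with the condition that the preimage of $\cos(\pi/m)$ exceeds $-1$ (the paper writes this as $\gamma_m(h)>-1$, your $\varepsilon_1>-1$), and the same Chebyshev equioscillation zero-count---your anticipated ``weak alternation'' obstacle is precisely what the paper's Lemma covers. The only cosmetic difference is that the paper counts zeros of $D=P-T_m$ directly via Proposition~\ref{prop:chebysvev}, splitting into two cases according to which subinterval $(\xi_{j+1},\xi_j)$ contains $u^*$, whereas you first factor $D=(u-u^*)^2E$ and then apply the Lemma to $E$ on the $m$ nodes $u_1,\dots,u_m$, which sidesteps that case split.
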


This result may be restated by saying that for each fixed $h^*$, $h^*<h_m$, $h^*\neq \pi, 2\pi,
\dots,(m-1)\pi$, the intersection of the stability region with the line $h=h^*$ is strictly larger for RKRm
and KRKm than for integrators with stability polynomial different from \eqref{eq:stabpolm}. Before we prove
 Theorem~\ref{theo:main}, we need an auxiliary result that we present in the following subsection.

\subsection{Chebyshev polynomials}

It is well known that many properties of the Chebyshev polynomials are a consequence of the following equioscillation property: $T_m(\xi_i) = (-1)^i$
at the points $\xi_i=\cos(i\pi/m)$, $i=0,\dots,m$,  that partition $[-1,1]$ as $-1= \xi_m<\xi_{m-1}<\dots <
\xi_1< \xi_0 = 1$. We shall need  the following well-known, elementary equioscillation result, whose proof we provide for completeness:

\begin{lemma}Consider $k+1$ real points $x_0<x_1<\dots<x_k$. If $Q$ is a real polynomial such that either
$$Q(x_i) \geq 0,\: i\: {\rm  even}\quad {\rm and} \quad Q(x_i)\leq 0,\: i\: {\rm  odd},$$
or
$$Q(x_i) \leq 0,\: i\: {\rm  even}\quad {\rm and} \quad Q(x_i)\geq 0,\: i\: {\rm  odd},$$
then $Q(x)$ has $\geq k$ zeros (counting multiplicities) in the interval $[x_0,x_k]$.
\end{lemma}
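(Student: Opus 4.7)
My approach is to reduce the weak-alternation hypothesis to a \emph{strict} one by a small polynomial perturbation of $Q$, so that the intermediate value theorem produces $k$ distinct zeros in the open subintervals, and then to pass back to $Q$ using continuity of polynomial roots.

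Without loss of generality I assume the first sign pattern, $Q(x_i)\geq 0$ for even $i$ and $Q(x_i)\leq 0$ for odd $i$; the other case is symmetric. By Lagrange interpolation there is a polynomial $L$ of degree $\leq k$ with $L(x_i)=(-1)^i$ for $i=0,\ldots,k$. For small $\delta>0$ I define $Q_\delta(x)=Q(x)+\delta L(x)$. At every node, $Q_\delta(x_i)=Q(x_i)+\delta(-1)^i$ has strict sign $(-1)^i$: if $Q(x_i)\neq 0$ its sign already matches $(-1)^i$ and dominates for $\delta$ small, while if $Q(x_i)=0$ only the term $\delta(-1)^i$ survives. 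Hence $Q_\delta(x_0),\ldots,Q_\delta(x_k)$ strictly alternates in sign, and the intermediate value theorem yields at least one zero of $Q_\delta$ in each of the $k$ open intervals $(x_{i-1},x_i)$, giving $\geq k$ distinct zeros in $(x_0,x_k)$.

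The statement about $Q$ then follows by letting $\delta\to 0^+$. Enclose each zero $y_j$ of $Q$ lying in $[x_0,x_k]$ in a small open disk $D_j\subset\mathbb{C}$ containing no other zero of $Q$; on the compact complement $[x_0,x_k]\setminus\bigcup_j D_j$ the polynomial $Q$ is bounded away from $0$, hence so is $Q_\delta$ for $\delta$ small, so every zero of $Q_\delta$ in $[x_0,x_k]$ must lie in some $D_j$. By Hurwitz's theorem the number of zeros of $Q_\delta$ inside $D_j$ equals the multiplicity of $y_j$ as a zero of $Q$, once $\delta$ is small enough. Summing over $j$, the total multiplicity of zeros of $Q$ in $[x_0,x_k]$ is at least $k$. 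The only step requiring any care is this final continuity-of-roots argument; the construction of $L$ and the application of the intermediate value theorem are routine.
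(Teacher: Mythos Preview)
Your argument is correct (modulo the trivial case $Q\equiv 0$, which you should mention before invoking Hurwitz), but it proceeds quite differently from the paper. The paper gives a purely real-variable counting argument: it partitions $[x_0,x_k]$ into the $k$ subintervals $J_1=[x_0,x_1),\dots,J_{k-1}=[x_{k-2},x_{k-1}),\,J_k=[x_{k-1},x_k]$ and shows directly that any ``deficit'' (a subinterval $J_i$ with $i<k$ containing no zero) forces $Q(x_i)=0$ and then forces the neighbouring interval $J_{i+1}$ to carry at least two zeros, so the total is always $\geq k$. Your route instead perturbs $Q$ to $Q_\delta=Q+\delta L$ so as to make the sign alternation strict, obtains $k$ distinct simple zeros of $Q_\delta$ via the intermediate value theorem, and then transfers the count back to $Q$ by Hurwitz's theorem. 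The paper's approach is entirely elementary and self-contained, needing nothing beyond the intermediate value theorem and a careful bookkeeping of where zeros can sit; your approach trades that bookkeeping for a clean perturbation idea, at the price of importing a complex-analytic tool (continuity of roots/Hurwitz). Both are standard techniques for equioscillation-type lemmas; yours has the advantage of being a reusable template for similar statements, while the paper's has the advantage of staying within the real line and requiring no external results.
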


\begin{proof}Consider the $k$ disjoint intervals
$$
J_1= [x_0,x_1),\: J_2= [x_1,x_2),\dots, J_{k-1}=[x_{k-2},x_{k-1}),\;J_k= [x_{k-1},x_k],
$$
that partition $[x_0,x_k]$. We first point out that $Q(x)$ must have at least a zero in the closed interval $J_k$ (otherwise $Q(x)$ would be strictly $>0$ or strictly $<0$ for  $x\in[x_{k-1},x_k]$, in contradiction with the hypothesis).
On the other hand, it is possible that some of the semiclosed $J_i$, $i=1,\dots,k-1$, contain no zero of $Q(x)$, but, if that is the case, then $Q(x_i) = 0$. Furthermore, in that case, $J_{i+1}$ must contain at least two zeros, for if it only contained a single zero at $x_i$, then either $Q(x_{i-1})>0$, $Q(x_{i+1})<0$ or $Q(x_{i-1})<0$, $Q(x_{i+1})>0$. Thus, if a subinterval other than $J_k$ carries no zero, then the one to its right carries two, and this gives a total of at least $k$ zeros.
\end{proof}

The following result on Chebyshev polynomials is to our best knowledge not available in the literature. Its
proof is based on the preceding lemma.

\begin{proposition}\label{prop:chebysvev}
For given $m\geq 2$, let $P(x)$ be a real polynomial of degree $\leq m$ different from $T_m(x)$.
Assume that $P(x)-T_m(x)$ has a double zero $\xi\in (-1,1)$ such that $\xi\neq \xi_i=\cos(i\pi/m)$ for $i = 1,\dots,m-1$.
Then $|P(x)|>1$ for some  $x\in(\xi_m, \xi_1)= (-1,\cos(\pi/m))$.
\end{proposition}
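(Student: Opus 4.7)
The plan is a contradiction argument: assume $|P(x)| \le 1$ throughout $(\xi_m,\xi_1)$ and hence, by continuity, on the closed interval $[\xi_m,\xi_1]$; the goal is then to exhibit too many zeros of an auxiliary polynomial extracted from $P - T_m$.

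First I would set $Q(x) = P(x) - T_m(x)$, which is nonzero (since $P \neq T_m$) and of degree at most $m$. Because $\xi$ is a double zero of $Q$, I factor $Q(x) = (x-\xi)^2 R(x)$ with $R$ a nonzero polynomial of degree at most $m-2$. Evaluating at the $m$ Chebyshev extrema $\xi_m < \xi_{m-1} < \dots < \xi_1$ contained in $[\xi_m,\xi_1]$, the identity $T_m(\xi_i)=(-1)^i$ together with $|P(\xi_i)| \le 1$ yields $(-1)^{i+1} Q(\xi_i) \geq 0$, an alternating sign pattern across these $m$ points.

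The critical step is to push this sign pattern down to $R$. The hypothesis $\xi \neq \xi_i$ for $i = 1,\dots,m-1$, together with $\xi \in (-1,1)$ (which excludes $\xi = \xi_m = -1$), makes $(\xi_i - \xi)^2 > 0$ at each of these $m$ points, so $R(\xi_i)$ inherits the same alternating signs as $Q(\xi_i)$. Applying the preceding equioscillation lemma to $R$ at these $k+1 = m$ points then forces $R$ to have at least $m-1$ zeros (counting multiplicities) in $[\xi_m,\xi_1]$, which contradicts $\deg R \le m-2$ together with $R \not\equiv 0$.

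The main obstacle to anticipate is that applying the lemma directly to $Q$ yields only $m-1$ zeros, which is entirely consistent with $\deg Q \le m$ and delivers no contradiction. The whole argument hinges on the factorisation through $(x-\xi)^2$, which drops two degrees while the non-coincidence $\xi \neq \xi_i$ is exactly what is needed to preserve the alternating signs when passing from $Q$ to $R$; this is where the two hypotheses on $\xi$ are used in an essential way.
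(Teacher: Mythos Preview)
Your proof is correct and takes a genuinely different route from the paper's. The paper works directly with $D := P - T_m$: since applying the lemma to $D$ at the $m$ points $\xi_1,\dots,\xi_m$ yields only $m-1$ zeros (which, as you note, is not enough), the paper instead splits into cases according to which open subinterval $(\xi_{j+1},\xi_j)$ (or $(\xi_1,\xi_0)$) contains $\xi$, and in each case combines the double zero at $\xi$ with one or two separate applications of the lemma on the flanking subintervals to reach a total of at least $m+1$ zeros of $D$, contradicting $\deg D \le m$. Your factorisation $Q = (x-\xi)^2 R$ sidesteps the case analysis entirely: the hypothesis $\xi \notin \{\xi_1,\dots,\xi_m\}$ is exactly what lets the alternating sign pattern pass intact from $Q$ to $R$ at all $m$ points at once, and then a single application of the lemma to $R$ yields $\geq m-1$ zeros against a degree bound of $m-2$. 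Your argument is shorter and isolates more transparently where each hypothesis on $\xi$ is used.
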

\begin{proof}Assume that $|P(x)|\leq 1$  in $(\xi_m, \xi_1)$ and consider the difference $D(x) =
P(x)-T_m(x)$. For $i= 1,\dots,m$ with $i$ odd, we have  $ D(\xi_i) = P(\xi_i)-T(x_i) = P(\xi_i)- (-1)\geq -1+1
= 0$. Similarly, for $i= 1,\dots,m$ with $i$ even, we have $D(\xi_i) \leq 0$. There are two cases:
\begin{enumerate}
\item $\xi \in (\xi_1,\xi_0)$. Then, by the lemma, $D(x)$ has $\geq m-1$ zeros in $[\xi_m,\xi_1]$. These and
    the double zero $\xi\in(\xi_1,\xi_0)$ provide $\geq m+1$ zeros of $D(x)$. It follows  that $D(x)$
    vanishes identically, in contradiction with the hypotheses of the proposition.
\item $\xi$ is in an interval $(\xi_{j+1},\xi_j)$ with $j=1,\dots, m-1$. By applying the lemma twice, we see
    that $D(x)$ has  $\geq j-1$ zeros in $[\xi_j,\xi_1]$ and  $\geq m-j-1$ zeros in $[\xi_m,\xi_{j+1}]$. The
    subinterval $(\xi_{j+1},\xi_j)$ must contain at least three zeros, because, if the multiplicity of $\xi$
    were exactly $2$ and there were no other zeros in the subinterval, then $D(\xi_j)$ and $D(\xi_{j+1})$ would
    be either both $>0$ or both $<0$. We have thus found  $\geq j-1+(m-j-1)+3 = m+1$ zeros, which again leads to a
    contradiction.
\end{enumerate}
\end{proof}
\subsection{Proof of the main result}
\begin{figure}[t]
\centering
\includegraphics[width=.8\textwidth]{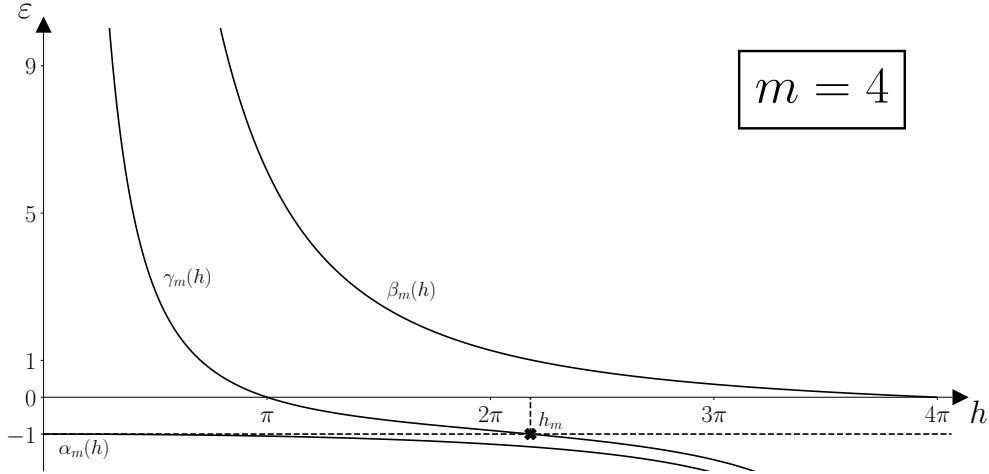}
\caption{Proof of the main result in the case $m=4$. In the model problem $\varepsilon >-1$. RKR4 and KRK4 are stable in the open region bounded by the lines $h= 0$, $h=m\pi$, $\varepsilon =- 1$, $\varepsilon= \beta_m(h)$. For each fixed $h$ such that $h<4\pi$, $h\neq \pi, 2\pi, 3\pi$, a competitor integrator will have $|P(\varepsilon,h)| >1$  for some $\varepsilon \in (\gamma_h,\beta_h)$. When $h<h_m$, those values
of $\varepsilon$ are $>-1$.
}
\label{fig:proof}
\end{figure}
The first item in Theorem~\ref{theo:main} was established at the very end of Section~\ref{sec:preliminaries}. In the second item, we only have to deal with $m\geq 2$, because we also saw in Section~\ref{sec:preliminaries} that there is no consistent one-stage integrator with stability polynomial different from the stability polynomial \eqref{eq:stabpolRKR} of RKR or KRK.

With fixed $h$ satisfying the conditions of the theorem, we change variables replacing $\varepsilon$ by the new variable
$$
x= \cos\left( \frac{h}{m}\right)-\frac{\varepsilon h}{2m} \sin\left( \frac{h}{m}\right).
$$
Since $h<h_m<m\pi$, this transformation is bijective. It maps $\varepsilon = \alpha_m(h)$ (see \eqref{eq:alphabetam}) into $x=1$ and $\varepsilon = \beta_m(h)$  into $x=-1$. The change of variables is chosen in such a way that, according to \eqref{eq:stabpolm}, the stability polynomial of RKRm or KRKm is transformed into the Chebyshev polynomial $T_m(x)$.

Denote by $P(x)$ the $m$-degree polynomial in the variable $x$ resulting from changing variables in
 the stability polynomial $P(\varepsilon,h)$ of the integrator $\psi_h$
 (note that the dependence of $P(x)$ on $h$ has been left out of the notation). By hypothesis, $P(x)$ cannot coincide with $T_m(x)$.
 From Proposition~\ref{prop:consistency}, $P(\varepsilon,h)-P^{[RKRm]}(\varepsilon,h)$ as a polynomial in $\varepsilon$
  has a double root at $\varepsilon=0$ and accordingly $P(x)-T_m(x)$ has a double zero at the
  corresponding value of $x$ given by $\xi=\cos(h/m)$. Since $h$ is assumed to be $\neq \pi, 2\pi,\dots,(m-1)\pi$, $\xi$ is not one of the extrema
$\xi_i = \cos(i\pi/m)$, $i = 1,\dots,m-1$, of $T_m(x)$. Proposition~\ref{prop:chebysvev} reveals that $|P(x)|$
has to exceed $1$ as some point $x\in (-1, \cos(\pi/m))$; the corresponding $\varepsilon$-value will be in the
interval $(\gamma_m(h), \beta_m(h))$ with
$$
\gamma_m(h) = \frac{2m}{h\sin(h/m)}\Big(\cos(h/m)-\cos(\pi/m)\Big).
$$
The condition $h<h_m$ implies $\gamma_m(h) >-1$ (see Figure~\ref{fig:proof}). We have thus found values of $\varepsilon\in (-1,\beta_m(h))$ that lead to instability and the proof is complete.

\section{Assessing the size of the stability region}
\label{sec:discussion}

The result we have just presented does not provide quantitative information on the size of stability regions in the full $(\varepsilon,h)$ plane
of the different integrators.  In this section, we present a more quantitative analysis; it turns out that
Strang integrators have much larger stability regions than their competitors.
\subsection{Stability near $\varepsilon = 0$, $h=n\pi$}
When $\varepsilon = 0$ all splitting integrators \eqref{eq:OscillationCompositionIntegrator} are exact and therefore $M_{0,h}$ is the matrix corresponding to a rotation by $h$ radians, with semitrace $P(0,h) =\cos(h)$. If $h>0$ is not an integer multiple of $\pi$,  the magnitude of the trace is $<2$ and the matrix
$M_{0,h}$ is strongly stable \cite[sections 25 and 42]{ArnoldBook} and \cite{Krein1950} (see also \cite{BouRabee2017}). Accordingly, the integrator is stable in a neighborhood of $(0,h)$. On the other hand, $P(0,n\pi) = (-1)^n$, $n=1,2,\dots$, and perturbations of the parameter values $\varepsilon = 0$, $h =n\pi$ may render the integrator exponentially unstable. For instance, RKRm and KRKm are stable, as we know, in the neighbourhood of
$(0,\pi)$, \dots, $(0, (m-1)\pi)$ but not in the neighbourhood of $(0,m\pi)$ (see Figure~\ref{fig:proof}). We now investigate the stability of  general integrators \eqref{eq:OscillationCompositionIntegrator} in the neighbourhood of the points $(0,n\pi)$, $n = 1, 2, \dots$

We assume that $n$ is \emph{odd} (the case $n$ even is entirely parallel). Then $P(0,n\pi) = -1$ and a necessary condition for the method to be stable in a neighbourhood of $(0,n\pi)$ is that this point be a minimum of $P$. Since, for $\varepsilon = 0$, $P(0,h) = \cos(h)$, we have $(\partial/\partial h) P(0,h) = -\sin(h)$ and  $(\partial/\partial h) P(0,n\pi) = 0$. In addition, from Proposition~\ref{prop:consistency}, $(\partial/\partial \varepsilon) P(0,h) = -(h/2) \sin(h)$, and, therefore
$(\partial/\partial \varepsilon) P(0,n\pi)) = 0$; we conclude that all integrators satisfy the first-order necessary conditions for $(0,n\pi)$ to be a minimum of $P$. Turning now to the second-order necessary conditions, from $(\partial^2/\partial h^2)P(0,h) = -\cos(h)$ and $(\partial^2/\partial \varepsilon \partial h ) P(0,h) = (-1/2) (\sin(h)+h\cos(h))$, we see that the Hessian of $P$ at $(0,nh)$ takes the form
$$
    \begin{bmatrix}
    \frac{\partial^2}{\partial \varepsilon^2}P(0,n\pi)&\frac{n\pi}{2}\\
    \frac{n\pi}{2}&1
    \end{bmatrix}.
$$
(The top left entry changes with the integrator, the other three do not.)
For $(0,n\pi)$ to be a minimum, the Hessian has to be positive semidefinite; since the bottom right entry is $>0$, positive semidefiniteness is equivalent to nonnegative determinant, i.e.\ to
$$
\frac{\partial^2}{\partial \varepsilon^2}P(0,n\pi) \geq \frac{n^2\pi^2}{4}.
$$
However  Proposition~\ref{prop:bound} ensures that the opposite inequality holds and we have proved the $n$ odd case of the following result (the $n$ even case is proved in a parallel way, changing minimum to maximum, etc.).

\begin{proposition}
\label{prop:necessary} Assume that an integrator of the form \eqref{eq:OscillationCompositionIntegrator}
 is stable for values of $(\varepsilon,h)$ in a neighbourhood of $(0,n\pi)$, $n= 1,2, \dots$ Then necessarily:
 $$
 \frac{\partial^2}{\partial \varepsilon^2}P(0,n\pi) = (-1)^{n+1}\frac{n^2\pi^2}{4}.
 $$
\end{proposition}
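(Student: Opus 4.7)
The plan is to finish the argument sketched in the paragraph preceding the statement: combine a second-order extremum test at $(0,n\pi)$ with the upper bound supplied by Proposition~\ref{prop:bound} to squeeze $\frac{\partial^2}{\partial\varepsilon^2}P(0,n\pi)$ between matching inequalities. First I would note that stability in a neighbourhood of $(0,n\pi)$ forces $|P(\varepsilon,h)|\leq 1$ throughout that neighbourhood, and since $P(0,n\pi)=(-1)^n$ attains the extreme admissible value, $(0,n\pi)$ is automatically a local minimum of $P$ when $n$ is odd and a local maximum when $n$ is even.

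Next, I would collect the derivatives at $(0,n\pi)$. Proposition~\ref{prop:consistency} gives $\frac{\partial}{\partial\varepsilon}P(0,h)=-\tfrac{h}{2}\sin h$, while $P(0,h)=\cos h$ (the integrator is exact at $\varepsilon=0$) yields $\frac{\partial}{\partial h}P(0,h)=-\sin h$. Both vanish at $h=n\pi$, so the first-order conditions are automatic. Differentiating once more gives $\frac{\partial^2}{\partial h^2}P(0,n\pi)=-(-1)^n$ and $\frac{\partial^2}{\partial\varepsilon\partial h}P(0,n\pi)=-\tfrac{n\pi}{2}(-1)^n$, so the Hessian is
$$
H_n=\begin{bmatrix}\frac{\partial^2}{\partial\varepsilon^2}P(0,n\pi) & -\frac{n\pi}{2}(-1)^n \\ -\frac{n\pi}{2}(-1)^n & -(-1)^n\end{bmatrix}.
$$
For $n$ odd, positive semidefiniteness of $H_n$ is equivalent (since the $(2,2)$ entry is $+1$) to $\det H_n\geq 0$, giving $\frac{\partial^2}{\partial\varepsilon^2}P(0,n\pi)\geq\frac{n^2\pi^2}{4}$; this is exactly the computation already performed in the text. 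For $n$ even, negative semidefiniteness of $H_n$ requires the $(2,2)$ entry to be $\leq 0$ (it equals $-1$, fine) and $\det H_n\geq 0$, which flips the inequality to $\frac{\partial^2}{\partial\varepsilon^2}P(0,n\pi)\leq -\frac{n^2\pi^2}{4}$. Both parities are captured by the single inequality
$$
(-1)^{n+1}\frac{\partial^2}{\partial\varepsilon^2}P(0,n\pi)\geq\frac{n^2\pi^2}{4}.
$$

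Finally, I would invoke Proposition~\ref{prop:bound} (the technical result announced for the final section), which provides the reverse bound $(-1)^{n+1}\frac{\partial^2}{\partial\varepsilon^2}P(0,n\pi)\leq\frac{n^2\pi^2}{4}$ for every integrator of the form \eqref{eq:OscillationCompositionIntegrator}. The two inequalities together force equality, which is the claimed identity. There is no real obstacle here beyond careful sign bookkeeping across the two parities of $n$; the substantive content (the upper bound on the mixed quantity $(-1)^{n+1}\partial_\varepsilon^2 P(0,n\pi)$) is deferred to Proposition~\ref{prop:bound}, so the main thing to get right in this argument is the uniform rewriting of the minimum/maximum conditions so that both parities reduce to a single inequality matched against that bound.
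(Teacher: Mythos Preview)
Your proposal is correct and follows essentially the same approach as the paper: the paper also argues (for $n$ odd, with the even case declared parallel) that stability near $(0,n\pi)$ forces $(0,n\pi)$ to be an extremum of $P$, computes the same Hessian, obtains the semidefiniteness inequality $(-1)^{n+1}\partial_\varepsilon^2 P(0,n\pi)\geq n^2\pi^2/4$, and then squeezes against the reverse inequality from Proposition~\ref{prop:bound}. Your only difference is cosmetic---you carry both parities together in a single Hessian formula rather than treating the odd case and declaring the even case analogous.
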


This proposition is helpful to identify suitable values of the parameters $r_i$ and $k_j$ in \eqref{eq:OscillationCompositionIntegrator}, as will be clear in our
study of the stability of the families of three-stage integrators.

\subsection{Palindromic methods with $m=3$ stages}
 Integrators with three or fewer stages are important because, arguably, integrators with four or more stages are too complicated to be used in most applications. For the case of the {kinetic/potential} split systems \eqref{eq:DriftSystem}--\eqref{eq:KickSystem}, there are $3$-stage integrators that clearly improve on Verlet in HMC and molecular dynamics \cite{Fernandez2016, Nishimura2020, Mannseth2018, Aleardi2020, Goth2022, Ahmed2015}. As we shall prove presently, for the \eqref{eq:RotSystem}--\eqref{eq:KickSystemf} splitting studied in this paper, there is little room for improving on the Strang splitting. {As explained in the introduction this result is very relevant when choosing the integrator for HMC algorithms to sample from target distributions resulting from perturbing a Gaussian.}
\begin{figure}[t]%{\textwidth}
    \includegraphics[width=\textwidth]{KRKdPde.pdf}
    \caption{Palindromic three-stage, K-first integrators. On the left, for each $r$, the value $\varepsilon^*$ of the parameter $\varepsilon$} that locally minimizes the stability polynomial $P(\varepsilon, h^*,r)$. On the right, the minimum value $P(\varepsilon^*, h^*,r)$ as a function of $r$: except for three exceptional cases (see text), all integrators show $P<-1$, i.e.\ exponential instability.
    \label{fig:dPde}
    \end{figure}

For the sake of brevity we only present our findings for the K-first case in \eqref{eq:OscillationCompositionIntegrator}. The results for the R-first case differ in the details but yield the same conclusions. As we have noted several times, it is sufficient to study the palindromic case, for which, after imposing consistency, integrators take the form
\begin{equation}\label{eq:threestage}
\psi_h=\varphi_{kh}^{[K]}\circ\varphi_{rh}^{[R]}\circ\varphi_{(1/2-k)h}^{[K]}
\circ\varphi_{(1-2r)h}^{[R]}\circ\varphi_{(1/2-k)h}^{[K]}\circ\varphi_{rh}^{[R]}\circ\varphi_{kh}^{[K]}.
\end{equation}
There are two free parameters $k$ and $r$. If we wish to have stability in a neighbourhood of $(0,\pi)$ in the $(\varepsilon,h)$ plane, we have to impose the necessary condition in Proposition~\ref{prop:necessary},  that for \eqref{eq:threestage} is found to read
$$
4 k\sin^2(\pi r)= -\cos(2\pi r).
$$
However, this condition is only necessary for $P$ to have a minimum $P=-1$ at $\varepsilon = 0$, $h=\pi$. To investigate the behaviour of $P$ in the neighbourhood of $(0,\pi)$, we proceed as follows. We use the last display to express $k$ in terms of $r$ and see $P$ as a function of $(\varepsilon, h, r)$. We then fix a value $h^*= 3.12$ of $h$ slightly below $\pi$ and look at the behaviour of $P(\varepsilon, h^*, r)$. For each $r$ in a suitable range,\footnote{We present results for $r\in [0.2,0.6]$. Values of $r$ outside this interval are not of interest as a preliminary computer search shows they have poor stability properties near $\varepsilon = -1$.} we identify the value $\varepsilon^*(h^*,r)\approx 0$ of $\varepsilon$ for which
$(\partial/\partial \varepsilon)P(\varepsilon,h^*,r)$ vanishes (and therefore the function $\varepsilon\mapsto P(\varepsilon, h^*,r)$ may achieve a minimum) and plug this value into $P$ to obtain a function $F(r)=P(\varepsilon(h^*,r),h^*,r)$ of the real variable $r$. This function is plotted in the right panel of Figure~\ref{fig:dPde}, where we see that for \lq\lq most\rq\rq\ values of $r$, $F(r)$ takes values below $-1$, indicating exponential instability of the integrator. There are however three exceptional values of $r$, where $F=-1$:
\begin{itemize}
\item $r=1/4$. This leads to $k=0$ so that the first and last kicks in \eqref{eq:threestage} are the identity and may be suppressed. The integrator is then seen to be RKR2, that we know is indeed stable in the neighbourhood of $(0,\pi)$.
\item $r=1/3$. This yields KRK3, that we know is stable in the neighbourhood of $(0,\pi)$ (and also in the neighbourhood of $(0,2\pi)$).
\item $r=1/2$. Now the central rotation in \eqref{eq:threestage} is the identity. The integrator is KRK2, that we know is stable in the neighbourhood of $(0,\pi)$.
\end{itemize}

The values of $\varepsilon^*$ where the algorithm has been found to be exponentially unstable are plotted in the left panel of Figure~\ref{fig:dPde}. This shows that, for $h=3.12$, all the integrators considered (with the exceptions of RKR2, KRK2 and KRK3) are unstable for values of $\varepsilon$ extremely close to 0. For comparison, using \eqref{eq:alphabetam}, one sees that for $h=3.12$, RKR3 and KRK3  are stable for $\varepsilon \in (-1,3.36)$ and
RKR2, KRK2 are stable for $\varepsilon \in (-1,1.30)$. Also, from Theorem~\ref{theo:main}, for fixed, very small $\varepsilon>0$, RKR3 and KRK3  are stable up to $h \approx 3\pi$, while most three stage integrators have lost stability before $h$ reaches $\pi$.
The conclusion is clear: three-stage splitting integrators different from Strang have  very limited stability domains.
\section{A technical result}
In this section we establish the following result that was used to prove Proposition~\ref{prop:necessary}:

\begin{proposition}
\label{prop:bound}The stability polynomial $P(\varepsilon,h)$ of any (consistent) splitting integrator \eqref{eq:OscillationCompositionIntegrator} satisfies
$$
\frac{\partial^2}{\partial \varepsilon^2}P(0,n\pi)\leq \frac{n^2\pi^2}{4},\qquad   n = 1,3, \dots,
$$
and
$$
\frac{\partial^2}{\partial \varepsilon^2}P(0,n\pi)\geq -\frac{n^2\pi^2}{4},\qquad   n = 2,4, \dots
$$
\end{proposition}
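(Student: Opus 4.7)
The plan is to extract an explicit expression for $\partial^2 P/\partial\varepsilon^2$ by continuing the expansion of \eqref{eq:matrixasproduct} one order beyond the one carried out in the proof of \cref{prop:consistency}, and then to bound the resulting trigonometric sum by exploiting the consistency condition $\sum_i k_i = 1$.

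First, I would retain the $\varepsilon^2$ contribution to $M_{\varepsilon,h}$ in \eqref{eq:matrixasproduct}. This contribution arises by picking the term $\varepsilon h k_i K$ in exactly two of the kick factors; using the semigroup property $\exp(aR)\exp(bR) = \exp((a+b)R)$, the coefficient of $\varepsilon^2$ works out to
$$h^2 \sum_{1\le i<j\le m} k_i k_j\, \exp\bigl(h(1-\theta_j)R\bigr)\, K\, \exp\bigl(h(\theta_j-\theta_i)R\bigr)\, K\, \exp\bigl(h\theta_i R\bigr),$$
with $\theta_i = r_1 + \cdots + r_i$ as before. Cyclicity of the trace together with a direct $2\times 2$ computation of $K\exp(bR)K$ reduces the trace of each summand to $\sin(h(\theta_j-\theta_i))\sin(h-h(\theta_j-\theta_i))$. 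Evaluating at $h = n\pi$ and using $\sin(n\pi - x) = (-1)^{n+1}\sin x$, one obtains
$$\frac{\partial^2}{\partial \varepsilon^2} P(0,n\pi) = (-1)^{n+1}\, n^2\pi^2\, S, \qquad S := \sum_{1\le i<j\le m} k_i k_j\, \sin^2\bigl(n\pi(\theta_j-\theta_i)\bigr).$$

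It then suffices to prove $S \le 1/4$, which simultaneously handles both the $n$ odd and the $n$ even cases. Writing $\sin^2 x = (1-\cos 2x)/2$ and inserting the consistency identity $2\sum_{i<j} k_i k_j = (\sum_i k_i)^2 - \sum_i k_i^2 = 1 - \sum_i k_i^2$, the desired inequality reduces to the lower bound $\sum_{i<j} k_i k_j \cos(2n\pi(\theta_j-\theta_i)) \ge -\tfrac12 \sum_i k_i^2$. This in turn follows from the sum-of-squares identity
$$\sum_{i,j=1}^m k_i k_j \cos\bigl(2n\pi(\theta_j-\theta_i)\bigr) = \Bigl(\sum_i k_i \cos(2n\pi\theta_i)\Bigr)^{\!2} + \Bigl(\sum_i k_i \sin(2n\pi\theta_i)\Bigr)^{\!2} \ge 0,$$
obtained by expanding $\cos(A-B) = \cos A\cos B + \sin A\sin B$.

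The main obstacle is simply recognizing the sum-of-squares structure in the last display; once that is in place, the remaining steps are routine bookkeeping with trigonometric identities and $2\times 2$ matrices. The bound $S \le 1/4$ should be sharp, saturated by the RKRm and KRKm integrators at their stable points $(0,n\pi)$ with $n<m$, which provides a useful sanity check on the constants.
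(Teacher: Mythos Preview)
Your proof is correct and, in the bounding step, more direct than the paper's. Both arguments begin identically: extract the $\varepsilon^2$ coefficient from \eqref{eq:matrixasproduct}, take the semitrace, and arrive at
\[
\frac{\partial^2}{\partial\varepsilon^2}P(0,n\pi) = (-1)^{n+1}n^2\pi^2\, S, \qquad S = \sum_{i<j} k_ik_j\sin^2\bigl(n\pi(\theta_j-\theta_i)\bigr),
\]
reducing the proposition to $S \le 1/4$. At this point the paper assumes without loss of generality that $m$ is even and the integrator palindromic (invoking the fact, cited from \cite{Blanes2008}, that every integrator is conjugate to a palindromic one and that conjugate integrators share stability polynomials), splits the symmetrized double sum $\sum_{i,j}$ into four blocks, exploits the palindromic relations $k_{m+1-j}=k_j$ and $\theta_i-\theta_{m+1-j}=\theta_i+\theta_j-1$, and finally uses $\sin^2(A+B)+\sin^2(A-B)=1-\cos(2A)\cos(2B)$ to write the sum as a difference of two squares over the half-range $i\le m/2$. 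Your route bypasses the palindromic reduction entirely: the addition-formula identity
\[
\sum_{i,j} k_ik_j\cos\bigl(2n\pi(\theta_j-\theta_i)\bigr) = \Bigl(\sum_i k_i\cos(2n\pi\theta_i)\Bigr)^{2} + \Bigl(\sum_i k_i\sin(2n\pi\theta_i)\Bigr)^{2} \ge 0
\]
does the job directly for arbitrary (not necessarily palindromic) coefficients. The paper's approach has the minor by-product of expressing $S$ exactly as a difference of squares, making the equality cases visible; yours is shorter and self-contained, and the cyclicity-of-trace shortcut you use for the semitrace computation is also cleaner than the paper's explicit entrywise calculation.
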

\begin{proof}We recommence from \eqref{eq:matrixasproduct} in the proof of Proposition~\ref{prop:consistency}. The coefficient of $\varepsilon^2$ in the right hand-side of that equality is, with $\eta_i = \sum_{n=i+1}^{m+1}r_n$, $\theta_j = \sum_{n=1}^j r_n$,
$$
h^2 \sum_{i=2}^m \sum_{j=1}^{i-1}k_ik_j \exp(\eta_i hR) K \exp((1-\eta_i-\theta_j) hR) K \exp(\theta_j hR),
$$
where, by using the expressions for $\exp(tR)$ and $K$, the product of matrices in the summation may be computed as
\begin{gather*}
\begin{bmatrix}
   -\big(\sin(h\theta_j)\cos(h(1-\eta_i-\theta_j))-\sin(h(1-\eta_i))\big)\sin(h\eta_i)&\cdots\\
    \cdots&-\big(\sin(h\eta_i)\cos(h(1-\eta_i-\theta_j))-\sin(h(1-\theta_j))\big)\sin(h\theta_j)
    \end{bmatrix}.
\end{gather*}
We next take semitraces and recall that, from Taylor's theorem, the coefficient of $\varepsilon^2$
in a polynomial equals twice its second derivative evaluated at $\varepsilon=0$. In this way we find
\begin{eqnarray*}
\frac{\partial^2}{\partial \varepsilon^2}P(0,h)&=&-h^2 \sum_{i=2}^m \sum_{j=1}^{i-1}k_ik_j\Big[\big(\sin(h\theta_j)\cos(h(1-\eta_i-\theta_j))-\sin(h(1-\eta_i))\big)\sin(h\eta_i)\\
&&\qquad\qquad\qquad\qquad\qquad\qquad+\big(\sin(h\eta_i)\cos(h(1-\eta_i-\theta_j))-\sin(h(1-\theta_j))\big)
\sin(h\theta_j)\Big].
\end{eqnarray*}
By transforming the products of trigonometric functions into sums, we obtain
$$
\frac{\partial^2}{\partial \varepsilon^2}P(0,h) = \frac{h^2}{2}
\sum_{i=2}^m \sum_{j=1}^{i-1}k_ik_j \left(\cos\left(2h \left(\frac{1}{2}-\eta_i-\theta_j\right)\right) -\cos(h)\right),
$$
and evaluating at $h=n\pi$ we find, after some additional trigonometric manipulations,
$$
\frac{\partial^2}{\partial \varepsilon^2}P(0,n\pi) = (-1)^{n+1} n^2\pi^2
\sum_{i=2}^m \sum_{j=1}^{i-1}k_ik_j \sin^2\big(n\pi(\eta_i+\theta_j)\big).
$$
We now note that $\eta_i+\theta_j = 1-(\theta_i-\theta_j)$ and $\sin^2(n\pi-(\theta_i-\theta_j))
=\sin^2(n\pi(\theta_i-\theta_j))$, so that
$$
\frac{\partial^2}{\partial \varepsilon^2}P(0,n\pi) = (-1)^{n+1} n^2\pi^2
\sum_{i=2}^m \sum_{j=1}^{i-1}k_ik_j \sin^2\big(n\pi(\theta_i-\theta_j)\big).
$$
The proof will be ready if we prove that
$$\sum_{i=2}^m \sum_{j=1}^{i-1}k_ik_j \sin^2\big(n\pi(\theta_i-\theta_j)\big)\leq \frac{1}{4},$$
or, writing the double sum in a more symmetric form,
$$S=\sum_{i=1}^m \sum_{j=1}^{m}k_ik_j \sin^2\big(n\pi(\theta_i-\theta_j)\big)\leq \frac{1}{2}.$$
At this point, it is convenient to assume that (i) $m$ is even and (ii) the integrator is palindromic. As noted before there is no loss of generality in assuming (ii). And $m$ may always be taken to be even
by adding dummy stages. The double sum $S$ may be decomposed as
$$
S = \sum_{i=1}^m \sum_{j=1}^{m}
= \sum_{i=1}^{m/2} \sum_{j=1}^{m/2}
+ \sum_{i=1}^{m/2} \sum_{j=m/2}^{m}
+ \sum_{i=m/2}^m \sum_{j=1}^{m/2}
+ \sum_{i=m/2}^m \sum_{j=m/2}^{m},
$$
which, by symmetry, implies
$$
S = 2 \sum_{i=1}^{m/2} \sum_{j=1}^{m/2}k_ik_j \sin^2\big(n\pi(\theta_i-\theta_j)\big)
+ 2\sum_{i=1}^{m/2} \sum_{j=m/2}^{m}k_ik_j \sin^2\big(n\pi(\theta_i-\theta_j)\big).
$$
and, since $k_{m+1-j} = k_j$, $\theta_i-\theta_{m+1-j}=\theta_i+\theta_j-1$, $\sin^2\big(n\pi(\theta_i+\theta_j-1)\big) = \sin^2\big(n\pi(\theta_i+\theta_j)\big)$,
$$
S = 2 \sum_{i=1}^{m/2} \sum_{j=1}^{m/2}k_ik_j \sin^2\big(n\pi(\theta_i-\theta_j)\big)
+ 2\sum_{i=1}^{m/2} \sum_{j=1}^{m/2}k_ik_j \sin^2\big(n\pi(\theta_i+\theta_j)\big).
$$
We finally invoke the trigonometric identity $\sin^2(A+B)+\sin^2(A-B) = 1-\cos(2A)\cos(2B)$ and write
\begin{eqnarray*}
S &=& 2 \sum_{i=1}^{m/2} \sum_{j=1}^{m/2} k_ik_j \Big(1-\cos(2n\theta_i)\cos(2n\theta_j)\Big)\\
&=& 2 \left(\sum_{i=1}^{m/2}k_i\right)^2 - 2 \left(\sum_{i=1}^{m/2}k_i \cos (2n\pi\theta_i)\right)^2\\
&\leq&  2 \left(\sum_{i=1}^{m/2}k_i\right)^2 = \frac{1}{2},
\end{eqnarray*}
and the proof is complete.
\end{proof}

\section*{Acknowledgments}
\noindent
We would like to thank Sergio Blanes for alerting us to the open problem of the Strang splitting's optimality for the alternative $R,K$ integrators.

\bigskip\bigskip\bigskip

\bibliographystyle{siamplain}

\end{document}